\DeclarePairedDelimiter{\ceil}{\lceil}{\rceil}
\theoremstyle{plain}
\newtheorem{thm}{Theorem}[section]
\newtheorem{prop}[thm]{Proposition}
\newtheorem{lemma}[thm]{Lemma}
\newtheorem{ques}{Question}
\theoremstyle{definition}
\newtheorem{remark}{Remark}
\DeclareMathOperator{\Mod}{Mod} 
\DeclareMathOperator{\SL}{SL} \DeclareMathOperator{\PSL}{PSL}
 \DeclareMathOperator{\PGL}{PGL}
\DeclareMathOperator{\Jac}{Jac}
\DeclareMathOperator{\tr}{tr}
\newcommand{\norm}[1]{\left\| #1 \right\|}
\begin{document}
\title{On the Jacobian of the Douady-Earle extension}
\author{Chris Connell$^\dagger$, Yuping Ruan, Shi Wang$^{\ddagger}$}

\address{Department of Mathematics,
Indiana University, Bloomington, IN 47405, USA}
\email{connell@iu.edu}
\address{Department of Mathematics,
Northwestern University, Evanston, IL 60208, USA}
\email{ruanyp@northwestern.edu}
\address{Institute of Mathematical Sciences,
ShanghaiTech University, Shanghai 201210, China}
\email{wangshi@shanghaitech.edu.cn}
\thanks{$^\dagger$ Supported in part by Simons Foundation grant \#965245 and National Science Foundation grant DMS-2514510}
\thanks{$^\ddagger$ Supported in part by NSFC grant \#12301085}
\maketitle

\begin{abstract}
     Given an isotopy class between two closed hyperbolic surfaces, the Douady--Earle extension provides a unique analytic diffeomorphism representative. In this paper we investigate the Jacobian of the Douady--Earle extension map $F$. We prove that $|\Jac F| \equiv 1$ precisely when $F$ is an isometry. Moreover, we construct a sequence of hyperbolic surfaces $\{\Sigma_i\}$ together with a fixed domain surface $\Sigma_0$ for which the Douady--Earle extension maps $F_i:\Sigma_0\to\Sigma_i$ satisfy $\max_{x\in\Sigma_0} \Jac F_i \to +\infty$. 
\end{abstract}

\section{Introduction}

For each $g \geq 2$, let $S_g$ denote a closed, oriented surface of genus $g$. 
Teichm\"uller theory concerns the global structure of the Teichm\"uller space 
$\mathcal{T}_g$, the space of all marked hyperbolic structures on $S_g$. 
Beginning with the foundational work of Fricke, Klein, Teichm\"uller, Fenchel, and Nielsen 
\cite{Fricke,Teich1939,Teich1944,FN02}, it is known that $\mathcal{T}_g$ is 
homeomorphic to $\mathbb{R}^{6g-6}$. Since then, the theory has developed 
extensive connections with complex geometry, low-dimensional topology, geometric 
group theory, and dynamical systems. In particular, numerous metric structures on $\mathcal T_g$ have been investigated, including the Teichm\"uller metric \cite{Teich1939}, the Thurston metric \cite{Th1986,Pa1988}, the Weil-Petersson metric \cite{Weil1958,Ah1961}, and many others \cite{Yau1978,ChernYau1975,LiuSunYau2004,LiuSunYau2009,Royden1971,McMullen2000}. 

The Teichm\"uller metric is a complete Finsler metric that measures the best quasi-conformal dilation constant between two Riemann surfaces. The Thurston metric is an asymmetric Finsler metric, defined similarly using the best Lipschitz constant between two hyperbolic surfaces. The Weil-Petersson metric is a noncomplete, Kahler metric that gives rise to a finite volume metric on the moduli space.

In this work, we investigate the Jacobian of the Douady--Earle extension map $F$. 
Our motivation is to understand how the volume distortion produced by $F$ 
relates to the geometry of Teichm\"uller space. 

Given two marked hyperbolic surfaces $X_1:=(f_1, \Sigma_1)$ and $X_2:=(f_2, \Sigma_2)$ in the Teichm\"uller space where $f_i:S_g\to \Sigma_i$ ($i=1,2$) are the "marking" homeomorphisms, following the work of Douady-Earle \cite{DE86}, there exists a unique ``barycenter map'' $F:\Sigma_1\to \Sigma_2$ within the same isotopy class of $f_2\circ f_1^{-1}$ known as the Douady-Earle extension. It is shown in \cite{DE86} that $F$ is an analytic diffeomorphism between two hyperbolic surfaces. Note that the unique harmonic map in the same isotopy class of $f_2\circ f_1^{-1}$ is also known to be a diffeomorphism (\cite{Schoen-Yau, Sampson}), but these maps are not always the same (\cite{Li12, Jiang-Liu-Yao}).

We associate to this unique Douady-Earle extension map the following asymmetric function
\[J(X_1,X_2):=\log \left(\max_{x\in \Sigma_1}|\Jac F(x)|\right).\]
Since $F$ preserves the total area, there always exists a point $x\in \Sigma_1$ with $|\Jac F(x_1)|\geq 1$, thus $J$ is a non-negative function on $\mathcal T_g\times \mathcal T_g$. Moreover, any mapping class $[\sigma]\in \Mod(S_g)$ acts on $\mathcal T_g$ via 
$[\sigma](f,\Sigma)=(f\circ \sigma^{-1},\Sigma)$, so the Douady--Earle extension map defined for the pair $[\sigma](X_1)$ and $[\sigma](X_2)$ remains the same. This shows that $J$ is invariant under the diagonal action of the mapping class group $\Mod(S_g)$. The main purpose of this paper is to show the following two results.

\begin{thm}\label{thm:rigidity}
    For any $g\geq 2$, and any pair $X,Y\in \mathcal T_g$, we have $J(X,Y)=0$ if and only if $X=Y$.
\end{thm}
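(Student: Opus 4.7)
The plan is to establish each direction of the equivalence separately, reducing the nontrivial direction to the rigidity statement $|\Jac F|\equiv 1 \Longrightarrow F$ is an isometry announced in the abstract.

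For the easy direction, suppose $X=Y$ in $\mathcal T_g$. Then $f_2\circ f_1^{-1}$ is isotopic to some isometry $\iota:\Sigma_1\to\Sigma_2$. Passing to universal covers, $\iota$ lifts to a M\"obius transformation $\tilde\iota:\mathbb H^2\to\mathbb H^2$ whose boundary values on $S^1=\partial\mathbb H^2$ coincide with those of every equivariant lift in this isotopy class, in particular the lift $\tilde F$ of the Douady--Earle extension. Since the Douady--Earle extension depends only on the boundary homeomorphism and is conformally natural, the extension of a M\"obius boundary map is the M\"obius map itself, so $\tilde F=\tilde\iota$. Thus $F=\iota$ is an isometry, $|\Jac F|\equiv 1$, and $J(X,Y)=0$.

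For the converse, suppose $J(X,Y)=0$, so $\max_{x\in\Sigma_1}|\Jac F(x)|=1$. By Gauss--Bonnet, $\Sigma_1$ and $\Sigma_2$ both have area $2\pi(2g-2)$, and the change-of-variables formula yields
\[
\int_{\Sigma_1}|\Jac F|\,dA_1=\mathrm{Area}(\Sigma_2)=\mathrm{Area}(\Sigma_1),
\]
so the mean value of $|\Jac F|$ on $\Sigma_1$ equals $1$. Combined with the pointwise bound $|\Jac F|\leq 1$ coming from the hypothesis, this forces $|\Jac F|\equiv 1$. Invoking the rigidity statement $|\Jac F|\equiv 1 \Longrightarrow F$ is an isometry, we conclude that $F$ is an isometry in the isotopy class of $f_2\circ f_1^{-1}$, which is precisely the defining condition for $X=Y$ in $\mathcal T_g$.

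The main obstacle is of course the rigidity step itself, and I would attack it on the universal cover using an equivariant lift $\tilde F:\mathbb H^2\to\mathbb H^2$ of the quasi-symmetric boundary homeomorphism $\phi:S^1\to S^1$ determined by the two Fuchsian groups. The Douady--Earle formula expresses the tangent map $d\tilde F|_{z}$ as an integral of $\phi$ against the visual/Poisson measure based at $z$; by precomposing and postcomposing with hyperbolic isometries one may normalise so that $z=0$ and $\tilde F(0)=0$. The hypothesis $|\Jac \tilde F(z)|\equiv 1$ should, after unpacking this integral representation and applying a Cauchy--Schwarz or Jensen-type saturation argument, translate into the requirement that $\phi$ be a M\"obius transformation of $S^1$. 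Once $\phi$ is known to be M\"obius, uniqueness and conformal naturality of the Douady--Earle extension identify $\tilde F$ with the corresponding M\"obius isometry, which descends to the desired isometry $\Sigma_1\to\Sigma_2$. Turning the pointwise saturation of $|\Jac\tilde F|$ into M\"obius rigidity of $\phi$ is the delicate part; the rest is essentially naturality, Gauss--Bonnet, and the definition of Teichm\"uller equivalence.
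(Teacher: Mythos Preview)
Your overall scaffolding is correct and matches the paper: the easy direction is exactly as you say, the reduction from $J(X,Y)=0$ to $|\Jac F|\equiv 1$ via the area identity is the right first move, and the endgame---Cauchy--Schwarz saturation in the integral formula for $d\widetilde F$ forcing $\varphi$ to be the boundary trace of an isometry---is precisely how the paper finishes.

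What you are missing is the step in between, and it is the one genuinely new idea in the proof. Knowing only $|\Jac F|\equiv 1$ tells you that the singular values of $dF_x$ satisfy $\lambda_1(x)\lambda_2(x)=1$, not that $\lambda_1(x)=\lambda_2(x)=1$. The trace/Cauchy--Schwarz computation you allude to only produces an equality (and hence the pointwise identity $dB_{\theta,x_0}(e_i)=dB_{\varphi(\theta),y_0}(\overline e_i)$) when it is run at a point $x_0$ where $dF_{x_0}$ is actually orthogonal. If you try to run it at a point where $\lambda_1>1>\lambda_2$, the left-hand side of the traced identity is $\lambda_1 a_1+\lambda_2 a_2$ with $a_1+a_2=1$ rather than $1$, and the inequality chain no longer closes up. So you need to \emph{produce} a point where $dF$ is a local isometry before any saturation argument can begin.

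The paper does this by a topological argument you have not anticipated: if no such point existed, then $\|dF\|>1$ everywhere and the direction of maximal stretch would define a smooth line field on $\Sigma_1$, contradicting $\chi(\Sigma_1)=2-2g<0$. This Euler-characteristic obstruction is the missing hinge; once you have a single point $x_0$ with $dF_{x_0}$ orthogonal, the rest of your sketch (normalise, saturate Cauchy--Schwarz in the integral identity \eqref{eq:2-form}, read off that $\varphi$ agrees a.e.\ with the boundary map of an isometry, invoke uniqueness of the Douady--Earle extension) goes through exactly as the paper does it.
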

\begin{thm}\label{thm:unbounded}
    For any $g\geq 2$, the function $J$ is unbounded on $\mathcal T_g\times \mathcal T_g$.
\end{thm}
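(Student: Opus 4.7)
I fix a reference surface $\Sigma_0\in \mathcal T_g$ together with a simple closed geodesic $\alpha\subset \Sigma_0$, and take the sequence $\Sigma_i\in \mathcal T_g$ obtained by varying the Fenchel--Nielsen coordinates so that $\ell_{\Sigma_i}(\alpha)\to 0$ (pinching $\alpha$) while the other coordinates stay fixed. This is a divergent sequence in $\mathcal T_g$ approaching a noded surface. The goal is to show $\max_{x\in \Sigma_0}|\Jac F_i(x)|\to +\infty$ for the Douady--Earle extensions $F_i:\Sigma_0\to \Sigma_i$.

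I first lift to the universal cover. Let $\rho_0,\rho_i:\pi_1(S_g)\to \PSL(2,\mathbb R)$ denote the holonomy representations, $\tilde F_i:\mathbb H^2\to \mathbb H^2$ the equivariant lift of $F_i$, and $\phi_i:\partial \mathbb H^2\to \partial \mathbb H^2$ its boundary extension. Conjugate so that $\rho_0(\alpha)$ and $\rho_i(\alpha)$ share the imaginary axis of the upper half-plane as their common axis, with translation lengths $\ell_0$ and $\ell_i\to 0$. The equivariance $\phi_i\circ \rho_0(\alpha)=\rho_i(\alpha)\circ \phi_i$ forces, on the positive real axis,
\[
\phi_i(s)=s^{\ell_i/\ell_0}\,v_i(\log s),\qquad s>0,
\]
with $v_i$ a positive $\ell_0$-periodic function. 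As $\ell_i/\ell_0\to 0$, $\phi_i$ collapses every compact subinterval of $(0,\infty)$ onto an arbitrarily small neighborhood of a single point, and analogously on the other arc; the pushforwards $(\phi_i)_*\mu_z$ of harmonic measures then concentrate near the fixed points of $\rho_i(\alpha)$.

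I then feed this boundary degeneration into the Douady--Earle barycenter equation to extract quantitative estimates on $D\tilde F_i$. The anticipated conclusion is that, inside any fundamental domain for $\rho_0$, the preimage under $F_i$ of a fixed Margulis collar $\mathcal C_i$ of $\alpha_i\subset \Sigma_i$ exhausts all but a vanishing-area portion of $\Sigma_0$. Since $\mathrm{area}(\Sigma_i\setminus \mathcal C_i)$ is bounded below by a positive constant as $\ell_i\to 0$, while $\mathrm{area}\bigl(F_i^{-1}(\Sigma_i\setminus \mathcal C_i)\bigr)\to 0$, the identity $\int_{\Sigma_0}|\Jac F_i|\,d\mathrm{area}=\mathrm{area}(\Sigma_i)$ makes the average of $|\Jac F_i|$ on this vanishing set grow without bound, giving the theorem.

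The main obstacle lies in the quantitative analysis of the barycenter equation demanded by the previous paragraph. Because $F_i$ is defined only implicitly, one has to linearize this equation and compare the pushforward measures $(\phi_i)_*\mu_z$ with their atomic limits carefully, controlling the contribution of $\phi_i$ on boundary arcs away from conjugates of the fixed points of $\alpha$ (where $\phi_i$ remains uniformly quasi-symmetric in $i$). In particular, one has to show that the anisotropic distortion of $D\tilde F_i$ near the axis of $\rho_i(\alpha)$ is \emph{not} compensated by transverse contraction in a way that keeps the Jacobian bounded; ruling this out is the technical heart of the argument. An alternative route --- a compactness and limiting-map argument in the universal cover combined with the rigidity of Theorem~\ref{thm:rigidity} to force any hypothetical bounded-Jacobian limit to be an isometry, contradicting divergence of $\Sigma_i$ in $\mathcal T_g$ --- may also be viable, and in some respects is cleaner, at the cost of losing an explicit handle on the blow-up rate.
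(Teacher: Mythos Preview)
Your overall strategy---pinch a simple closed curve, analyze the induced boundary homeomorphism along the axis of the pinched curve, and feed the resulting degeneration into the barycenter equation \eqref{eq:2-form}---is exactly the route the paper takes. However, what you have written is explicitly a plan rather than a proof: the step you flag as ``the technical heart'' (ruling out that transverse contraction compensates the longitudinal blow-up so as to keep the Jacobian bounded) is precisely the content of the theorem, and you have not carried it out. Your area argument does not sidestep this difficulty; the claim that $\operatorname{area}\bigl(F_i^{-1}(\Sigma_i\setminus\mathcal C_i)\bigr)\to 0$ is essentially equivalent to a lower bound on $|\Jac F_i|$ over a region of definite area, and you give no mechanism to establish it.

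The idea the paper supplies, and which you are missing, is to build extra \emph{discrete symmetry} into the deformation so that the implicit barycenter map becomes computable at a single point. Concretely, the paper chooses Fenchel--Nielsen coordinates with all twists zero and all lengths equal to $1$ except $\ell_1=\epsilon$; the resulting surfaces $\Sigma_\epsilon$ carry a $(\mathbb Z/2\times\mathbb Z/2)$ isometry group (the two reflections through and across the pinched geodesic), and these symmetries are intertwined by the boundary map $\varphi_\epsilon$. This forces $\widetilde F_\epsilon(O_1)=O_\epsilon$ and makes $d\widetilde F_\epsilon$ diagonal there, with eigenvalues $\lambda_1,\lambda_2$ along and across the axis. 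Pairing \eqref{eq:2-form} with the axial frame and using the symmetry to reduce to integrals over a single quadrant, one gets the closed forms
\[
\lambda_1=\frac{\int_0^{\pi/2}\cos\theta\,\cos\varphi_\epsilon(\theta)\,d\theta}{\int_0^{\pi/2}\sin^2\varphi_\epsilon(\theta)\,d\theta},\qquad
\lambda_2=\frac{\int_0^{\pi/2}\sin\theta\,\sin\varphi_\epsilon(\theta)\,d\theta}{\int_0^{\pi/2}\cos^2\varphi_\epsilon(\theta)\,d\theta}.
\]
Elementary estimates (tracking how $\varphi_\epsilon$ sends the angle $\Theta(k)$ to $\Theta(k\epsilon)$ by $\rho_\epsilon$-equivariance) then give $\lambda_1\gtrsim (\epsilon^2\ln^2\epsilon)^{-1}$ and $\lambda_2\gtrsim \epsilon$, hence $\Jac F_\epsilon(p_1)=\lambda_1\lambda_2\gtrsim (\epsilon\ln^2\epsilon)^{-1}\to\infty$. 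In your generic pinching you do not know $\widetilde F_i(O)$, you do not know that $d\widetilde F_i$ is diagonal anywhere, and you cannot isolate the eigenvalues; the symmetry is what converts the implicit equation into an explicit two-variable computation and yields the separate lower bounds on $\lambda_1$ and $\lambda_2$ that exactly exclude the cancellation you were worried about.
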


\begin{remark}
    We note that \cref{thm:unbounded} corrects a misstatement in \cite{Li12} which claimed that the $J$-function is bounded. 
\end{remark}

One motivation for studying this $J$ function comes from the higher dimensional vision. In \cite{BCG95}, Besson-Courtois-Gallot generalized the Douady-Earle's construction to a large family of maps between Riemannian manifolds. By implementing a delicate sharp estimate on the Jacobian of their barycenter map, they proved the entropy rigidity conjecture for rank one symmetric spaces. (See also \cite{Ruan}.) In the special case where the starting map is a homotopy equivalence between two hyperbolic manifolds of dimension $\geq3$, they showed that the Jacobian of the barycenter map is always bounded above by $1$, or equivalently that the analog of the above defined $J$ value is zero, and further that the barycenter map must be an isometry. In particular, this gives a new proof of the Mostow rigidity. When the target manifold is an irreducible higher rank symmetric space excluding type $\SL_3(\mathbb R)$, it is shown \cite{CF03,CF03err} that the Jacobian is uniformly bounded. Nevertheless, the case of hyperbolic surfaces is missing from the literature. In stark contrast to the situation in higher dimensions, we show in the proof of \cref{thm:unbounded} that along a family of hyperbolic surface metrics constructed by collapsing a closed geodesic in a ``symmetric way'', the Jacobian of the Douady-Earle extension tends to infinity. It is plausible that this $J$ function measures, in a certain sense, the distance between two (marked) hyperbolic structures on an underlying surface. We end this section with the following question.

\begin{ques}
    Is this $J$ function an asymmetric distance function on $\mathcal T_g$? How does it compare to the known natural metrics?
\end{ques}

\subsection*{Acknowledgements.}
The third author thanks Huiping Pan and Junming Zhang for helpful discussions.

\section{Douady-Earle's barycenter extension map}
For $S_g$ a closed oriented surface of genus $g$, the Teichm\"uller space $\mathcal T_g$ consists of all hyperbolic structures on $S_g$ up to isotopy. More precisely, we have
\[\mathcal T_g:=\{(f,\Sigma)\;|\; f:S_g\to \Sigma\}/\sim,\]
where $\Sigma$ is a hyperbolic surface and $f$ is a homeomorphism between $S_g$ and $\Sigma$, and the equivalence relation is given by
\[(f_1,\Sigma_1)\sim (f_2,\Sigma_2)\textrm{ if and only if } f_2\circ f_1^{-1} \textrm{ is isotopic to an isometry.}\]
Each pair $(f,\Sigma)$ gives a hyperbolic structure on $S_g$ by pulling back the metric, and we call the map $f$ a marking. Each point $(f,\Sigma)\in \mathcal T_g$ corresponds to a representation $\pi_1(S_g)\to \PSL_2(\mathbb R)$ by composing the monodromy representation $\pi_1(\Sigma)\to \PSL_2(\mathbb R)$ with $f_*$, and the equivalence relation $\sim$ corresponds to the $\PGL_2(\mathbb R)$ conjugation equivalence on the representation space.

Given a pair of points on the Teichm\"uller space $X_1:=(f_1,\Sigma_1)$ and $X_2:=(f_2,\Sigma_2)$, the map $f_2\circ f_1^{-1}$ induces a group isomorphism $\rho:\pi_1(\Sigma_1)\to \pi_1(\Sigma_2)$. Fix two basepoints $O_1$ and $O_2$ on the universal cover $\mathbb H^2$ of $\Sigma_1$ and $\Sigma_2$. Since the actions of $\pi_1(\Sigma_i)$ on $\mathbb H^2$ are cocompact, they give rise to quasi-isometries between $\mathbb H^2$ and $\pi_1(\Sigma_i)$ via the orbit maps. Thus, it determines a unique quasi-isometry between $\mathbb H^2$, which is given by
\[\mathbb H^2\sim \pi_1(\Sigma_1)\cdot O_1\sim \pi_1(\Sigma_1)\sim_{\rho} \pi_1(\Sigma_2)\sim \pi_1(\Sigma_2)\cdot O_2\sim\mathbb H^2.\]
This induces a boundary homeomorphism 
\begin{equation}\label{eq:boundary-map}
    \varphi\colon\partial_\infty\mathbb H^2\to \partial_\infty\mathbb H^2.
\end{equation}
It is clear that $\varphi$ does not depend on the choice of basepoints $O_1, O_2$.

For any discrete isometry subgroup $\Gamma$ acting on $\mathbb H^2$, there exists \cite{Patterson1976} a family of Radon measures $\{\mu_x\;|\;x\in \mathbb H^2\}$ called the Patterson-Sullivan measures which satisfies the following properties:
\begin{enumerate}
    \item[$\bullet$] $\{\mu_x\}$ supports on $\Lambda(\Gamma)\subset \partial_\infty \mathbb H^2$ the limit set of $\Gamma$.
    \item[$\bullet$] $\{\mu_x\}$ is $\Gamma$-equivariant, that is, $\mu_{\gamma x}=\gamma_*\mu_x$ for any $x\in \mathbb H^2$ and $\gamma\in \Gamma$.
    \item[$\bullet$] $\{\mu_x\}$ is pairwise equivalent, and for any $x,y\in \mathbb H^2$, and any $\theta\in \Lambda(\Gamma)$ we have explicit Radon-Nykodym derivatives
    \begin{equation}\label{eq:RN-derivative}
        \frac{d\mu_x}{d\mu_y}(\theta)=e^{-\delta B_\theta(x,y)}.
    \end{equation}
    Here $\delta$ is the critical exponent of $\Gamma$ and 
 $$B_\theta(x,y):=\lim_{t\to +\infty}[d(x,c_{y\theta}(t))-t)]$$ denotes the Busemann function on $\mathbb H^2$ where $c_{y\theta}$ is the unit speed geodesic ray connecting $y$ to $\theta$.
\end{enumerate}
When $\Gamma$ is a lattice in $\PSL_2(\mathbb R)$, we know $\delta=1$ and that $\{\mu_x\}$ is in fact $\PSL_2(\mathbb R)$-equivariant. Indeed, $\{\mu_x\}$ coincides with the harmonic measures. Moreover, $\{\mu_x\}$ is the same as the visual measures upto a multiplicative scalar. To be specific, we let $ds_x$ be the Haar measure of $T^1_x\mathbb{H}^2$. Consider the visual map $\alpha_x:T^1_x\mathbb{H}^2\to\partial_\infty\mathbb{H}^2$ at $x$, which is a diffeomorphism defined as $\alpha_x(v)=v(-\infty)$, where $v(-\infty)$ denotes the endpoint of the geodesic ray with initial vector $-v$. Then we have 
\begin{align}\label{eqn:visual}
    \alpha_x^*\mu_x=\lambda ds_x\text{ for some }\lambda>0,\text{ and }\alpha_x^{-1}(\theta)=\nabla B_\theta (\cdot,y)|_x\text{ for any }\theta\in\partial_\infty\mathbb{H}^2\text{ and for any }y\in\mathbb{H}^2.
\end{align}

On the other hand, let $\mathcal{M}(\partial_\infty\mathbb H^2)$ denote the space of Radon measures on $\partial_\infty\mathbb H^2$. For any finite, nonzero and non-atomic measure $\nu\in \mathcal{M}(\partial_\infty\mathbb H^2)$, we can define the barycenter of $\nu$ by the formula
\[\operatorname{bar}(\nu):=x_{\nu}\in \mathbb H^2\]
such that at $x_{\nu}$ the function
\begin{equation}\label{eq:average-Buse}
    \mathcal B_\nu(x):=\int_{\partial_\infty\mathbb H^2} B_\theta(x,O)d\nu(\theta)
\end{equation}
reaches its unique minimum. Note that the definition of $\operatorname{bar}(\nu)$ is independent on the choice of basepoint $O$ and we often abbreviate $B_\theta(x,O)$ with $B_{\theta,x}$ when the basepoint either is chosen or does not matter.

We are now ready to define the Douady-Earle extension. For a pair of points on the Teichm\"uller space $X_1:=(f_1,\Sigma_1)$ and $X_2:=(f_2,\Sigma_2)$, we see from \eqref{eq:boundary-map} that there is a boundary homeomorphism $\varphi \colon\partial_\infty\mathbb H^2\to \partial_\infty\mathbb H^2$. Now we define $\widetilde{F}\colon \mathbb H^2\to \mathbb H^2$ to be
\begin{equation}\label{eq:barycenter}
    \widetilde{F}(x):=\operatorname{bar}(\varphi_*(\mu_x)),
\end{equation}
where $\varphi_*(\cdot)$ denotes the push-forward measures. 
Since $\{\mu_x\}$ is $\pi_1(\Sigma_1)$-equivariant, $\varphi$ is $\rho$-equivariant, and $\operatorname{bar}(\cdot)$ is  $\pi_1(\Sigma_2)$-equivariant, we see that $\widetilde F$ is $\rho$-equivariant, hence it descents to a map $F\colon \Sigma_1\to \Sigma_2$, which we call the Douady-Earle extension map (or the barycenter map). It is shown in \cite[Theorem 1]{DE86} that $F$ is an analytic diffeomorphism.

We now follow the treatment of \cite{BCG95} to obtain further properties of $F$. For any $x\in \mathbb H^2=\widetilde\Sigma_1$ and $y=\widetilde F(x)\in \mathbb H^2=\widetilde\Sigma_2$. Since at $y$ the function $\mathcal B_\nu$ (See \eqref{eq:average-Buse}) achieves the unique minimum, we have $d\mathcal B_{\nu,y}=0$ for $\nu=\varphi_*\mu_x$. Therefore, we have the following $1$-form equation
\begin{equation}\label{eq:1-form}
    0=\int_{\partial_\infty \mathbb H^2}dB_{\xi,y}(\cdot)(\varphi_*d\mu_x)(\xi)=\int_{\partial_\infty \mathbb H^2}dB_{\varphi(\theta),y}(\cdot)d\mu_x(\theta),
\end{equation}
where the second equality pulls back the integral by $\varphi$. Differentiating \eqref{eq:1-form} on both sides about the $x$ variable, and in view of \eqref{eq:RN-derivative} ($\delta=1$), we obtain the following equation for $(1,1)$-tensors
\begin{equation}\label{eq:2-form}
    \int_{\partial \mathbb H^2} \nabla dB_{\varphi(\theta),y}(d\widetilde F_x(\cdot))d\mu_x(\theta)=\int_{\partial \mathbb H^2} dB_{\theta,x}(\cdot)\nabla B_{\varphi(\theta),y} d\mu_x(\theta).
\end{equation}
This holds for any $y=\widetilde{F}(x)$.

\section{Proof of Theorem \ref{thm:rigidity}}
The ``if part'' is clear since if $\Sigma_1, \Sigma_2$ are isometric, then the isometry satisfies the defining equation $\eqref{eq:barycenter}$ of the Douady-Earle extension. 

Now assume $J(X,Y)=0$, that is, the Douady-Earle extension map $F$ between the pairs $X=(\Sigma_1, f_1)$, $Y=(\Sigma_2,f_2)$ satisfies $|\Jac F|(x)=1$ for all $x\in \Sigma_1$. We claim that there exists $x_0\in \Sigma_1$ such that $dF_{x_0}$ is an local isometry (possibly orientation reversing). Otherwise, at every point $x\in \Sigma_1$, we have $\norm{dF}>1$ with respect to the Riemannian inner product at $x$ and $y=F(x)$. By the Implicit Function Theorem, it follows that there is a smooth line field  $\mathcal{L}$ on $\Sigma_1$ such that at any point and either unit vector $e\in \mathcal{L}$, $|dF(e)|=\norm{dF}$  (i.e. $\mathcal{L}$ is the most expanding line). However the existence of a smooth line field $\mathcal{L}$ contradicts the fact that the Euler characteristic satisfies $\chi(\Sigma_1)\neq 0$. 

Thus, there is a point $x_0$ at which $dF_{x_0}$ is a local isometry. We denote $y_0=F(x_0)$. It follows that there exist orthonormal frames $\{e_1, e_2\}$ at $T_{x_0}\Sigma_1$ and $\{\overline{e_1}, \overline{e_2}\}$ at $T_{y_0}\Sigma_2$ such that $dF_{x_0}(e_1)=\overline{e_1}$ and $dF_{x_0}(e_2)=\overline{e_2}$. It is possible that $F$ is orientation reversing so that the frames $\{e_1, e_2\}$ and $\{\overline{e_1}, \overline{e_2}\}$ have reversed orientations. Lifting to the univeral cover $\mathbb H^2$, and we abuse the same notation $y_0=\widetilde F(x_0)$. Substituting into $\eqref{eq:2-form}$, we obtain that
\[ \int_{\partial \mathbb H^2} \nabla dB_{\varphi(\theta),y_0}(\overline{e_i})d\mu_{x_0}(\theta)=\int_{\partial \mathbb H^2} dB_{\theta,x_0}(e_i)\nabla B_{\varphi(\theta),y_0} d\mu_{x_0}(\theta).\]
For simplicity we normalize so that $\|\mu_{x_0}\|=1$. Now take the trace on both sides and note that $\tr \nabla dB_{\varphi(\theta),y_0}=\Delta B_{\varphi(\theta),y_0}\equiv 1$ in $\mathbb H^2$. We have
\begin{align*}
    1&=\sum_{i=1}^2\left\langle\int_{\partial \mathbb H^2} \nabla dB_{\varphi(\theta),y_0}(\overline{e_i})d\mu_{x_0}(\theta), \overline {e_i}\right\rangle\\
    &=\sum_{i=1}^2 \int_{\partial \mathbb H^2} dB_{\theta,x_0}(e_i)\langle \nabla B_{\varphi(\theta),y_0},\overline {e_i}\rangle d\mu_{x_0}(\theta)\\
    &=\int_{\partial \mathbb H^2}\left(\sum_{i=1}^2 dB_{\theta,x_0}(e_i)\cdot dB_{\varphi(\theta),y_0}(\overline {e_i})\right) d\mu_{x_0}(\theta)\\
    &\leq \int_{\partial \mathbb H^2}\frac{1}2\sum_{i=1}^2 \left(dB^2_{\theta,x_0}(e_i)+ dB^2_{\varphi(\theta),y_0}(\overline {e_i})\right) d\mu_{x_0}(\theta)=1,
\end{align*}
where the above inequality uses the mean inequality and the last equality uses the fact that $\tr dB_{\theta,x_0}^2=\tr dB^2_{\varphi(\theta),y_0}\equiv 1$ in $\mathbb H^2$. This forces the equality on the above inequality. Recall from \eqref{eqn:visual} that $\{\mu_x\}$ are all equivalent to the Lebesgue measure. Thus, for any $i=1,2$ and for Lebesgue a.e. $\theta\in\partial_\infty\mathbb{H}^2$, we have
\begin{align}\label{eqn:local.isom.a.e.}
dB_{\theta,x_0}(e_i)=dB_{\varphi(\theta),y_0}(\overline {e_i}).
\end{align}
Since both $\{e_1, e_2\}$ at $T_{x_0}\mathbb H^2$ and $\{\overline{e_1}, \overline{e_2}\}$ at $T_{y_0}\mathbb H^2$ are orthonormal, we consider a unique isometry $G:\mathbb H^2\to \mathbb H^2$ such that $G(x_0)=y_0$, $dG(e_1)=\overline{e_1}$ and $dG(e_2)=\overline{e_2}$. Then \eqref{eqn:visual} and \eqref{eqn:local.isom.a.e.} imply that $\varphi$ coincide with the boundary map induced by $G$. Notice that $G$ is the unique Douady-Earle extension map of its induced boundary map. This implies that $\widetilde{F}=G$ and hence $F$ is an isometry. In particular, $X=Y\in \mathcal T_g$.

\section{An explicit deformation}\label{sec:deformation}
In order to describe an explicit deformation of hyperbolic structures on the Teichm\"uller space, we briefly recall the Fenchel-Nielson coordinates \cite{FN02}. (Also see \cite[Section 10.6]{FM12} for a nice exposition) Given $S$ a closed oriented surface of genus $g$, and a decomposition into $(2g-2)$ pairs of pants by cutting along $(3g-3)$ disjoint simple closed curves on $S$. The lengths of these curves will uniquely determine a hyperbolic structure (with totally geodesic boundaries) for each pants, which consists of $(3g-3)$ positive length parameters $\ell_1,\dots, \ell_{3g-3}$. Along with these length parameters, there are also $(3g-3)$ twist parameters $\theta_1,\dots, \theta_{3g-3}$ (taking values in $\mathbb R$) that determine how the pairs of pants are glued together. The Fricke-Klein Theorem \cite{Fricke} states that there is a homeomorphism
\[\Phi\colon \mathcal T_g\to \mathbb R_+^{3g-3}\times \mathbb R^{3g-3}\]
given explicitly by the length and twisting parameters $(\ell_1,\dots,\ell_{3g-3};\theta_1,\dots,\theta_{3g-3})$.

Now we choose the first pants--one of the two pants towards the end which consists of only two simple closed curves, and with loss of generality, we denote it by $P_1=(\gamma_1,\gamma_2)$, where $\gamma_1$ is the simply closed curved shared by the two boundary components of the pants (see Figure \ref{fig:deformation}). Denote $\beta_1$ the seam on $P_1$ that transversely intersects $\gamma_1$. For $\epsilon\in (0,1]$, we choose a smooth curve $X_\epsilon:=(\Sigma_\epsilon, f_\epsilon)$ on $\mathcal T_g$ given by 
\[X_\epsilon=\Phi^{-1}(\epsilon,1,\dots,1;0,\dots,0).\]
Intuitively, this curve represents a deformation of hyperbolic structures on $S$ in such a way that, as $\epsilon\to 0^+$, it collapses the $\gamma_1$ curve in the most ``symmetric'' way. Since the twist parameter on $\gamma_1$ remains zero, we see that the closed geodesics in the class of $\gamma_1$ and $\beta_1$ always intersect orthogonally. We denote the intersection point by $p_{\epsilon}$. 

For the remaining section, we will study some of the properties on the family of the Douady-Earle extension maps $F_\epsilon$ from $X_1$ to $X_\epsilon$. We first describe certain symmetries on $\Sigma_\epsilon$.

\begin{figure}[ht]\centering
       \includegraphics[scale=0.65]{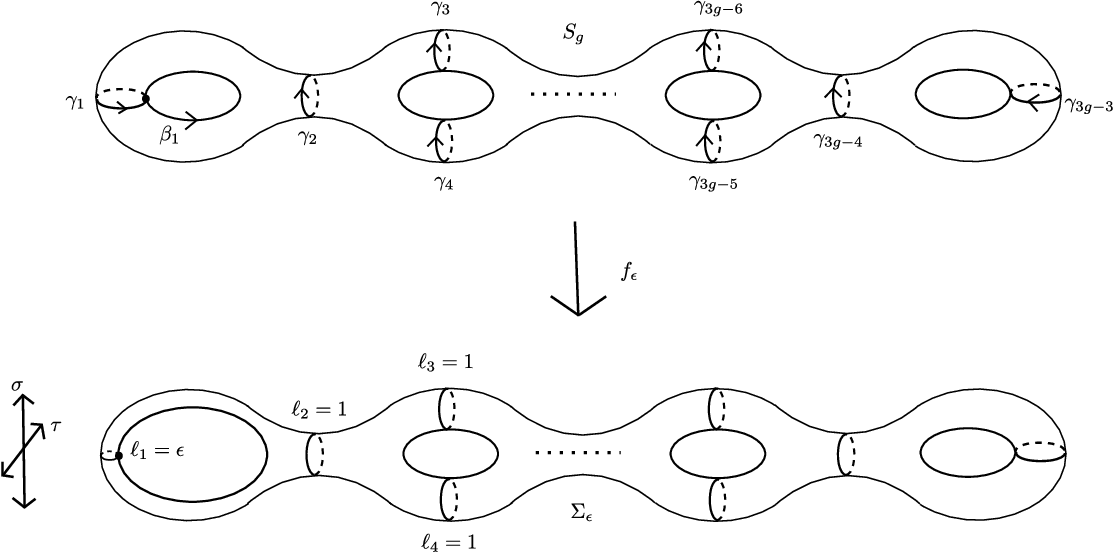}
       \caption{The deformation $X_\epsilon$}
       \label{fig:deformation}
\end{figure}

\subsection*{$(\mathbb Z/2\times \mathbb Z/2)$-symmetry:}
 There are two commutative involutive isometries $\sigma^{(\epsilon)},\tau^{(\epsilon)}$ on each $\Sigma_\epsilon$ which we describe below:
\begin{enumerate}
    \item[$\bullet$] The $\sigma$-symmetry: this corresponds to the up-and-down symmetry (see Figure \ref{fig:deformation}) by fixing the closed geodesics in the homotopy class of $\gamma_1$ and $\gamma_{3g-3}$ (later we simply call $\gamma_1$-geodesic and so on), stabilizing each $\gamma_{3i-1}$-geodesic (for $i=1,\dots, (g-1)$), and permuting $\gamma_{3i}$-geodesics with $\gamma_{3i+1}$-geodesics (for $i=1,\dots, (g-2)$). This involution can be defined first on each individual pants since the length parameters are the same ($\ell_{3i}=\ell_{3i+1}$), and then it extends to a global isometry on $\Sigma_\epsilon$ since the twist parameters are all zero.
    \item[$\bullet$] The $\tau$-symmetry: this corresponds to the front-and-back symmetry as in Figure \ref{fig:deformation}. On each pair of pants, it fixes the seams and permutes the two hyperbolic hexagons isometrically. Similarly, since all twist parameters are zero, it extends globally to an isometry on $\Sigma_\epsilon$. Under $\tau$, all $\gamma_i$-geodesics are stabilized, and all seams of the pants (including $\beta_1$ in particular) are fixed.
    \item[$\bullet$] From the definition, it is clear that $\sigma$ commutes with $\tau$.
    \item[$\bullet$] The families $\sigma^{(\epsilon)}$ and $\tau^{(\epsilon)}$ on $\Sigma_\epsilon$ are the conjugations by $f_\epsilon$ of the respective involutions $\sigma$ and $\tau$ on $S_g$. In particular, the isotopy class determined by the natural identification $\rho_\epsilon:=(f_\epsilon\circ f_1^{-1})_*:\pi_1(\Sigma_1,p_1)\to \pi_1(\Sigma_\epsilon,p_\epsilon)$ is both $(\sigma^{(1)},\sigma^{(\epsilon)})$ and $(\tau^{(1)},\tau^{(\epsilon)})$ equivariant.
\end{enumerate}
It is also convenient to describe $\sigma$ and $\tau$ when lifted to the universal cover. Suppose we choose the disk model of $\mathbb H^2:=\widetilde{\Sigma_\epsilon}$ by choosing a lift $O_\epsilon$ of $p_\epsilon$ to be the origin, the lift of $\beta_1$-geodesic (through $O_\epsilon$) the $x$-axis, and the lift of $\gamma_1$-geodesic (through $O_\epsilon$) the $y$-axis. These axes divide $\mathbb H^2$ into four isometric regions we call \emph{quadrants}. (See Figure \ref{fig:quadrant}.) We also denote the ideal boundary of quadrants in $\partial_\infty\mathbb H^2$ by the same term -- quadrants.  Moreover, $\tau$ and $\sigma$ simply lift to the reflection isometries about the $x$ and $y$-axes, respectively. For simplicity, we still denote these lifts again by $\tau$ and $\sigma$. 

\begin{figure}[ht]\centering
       \includegraphics[scale=0.65]{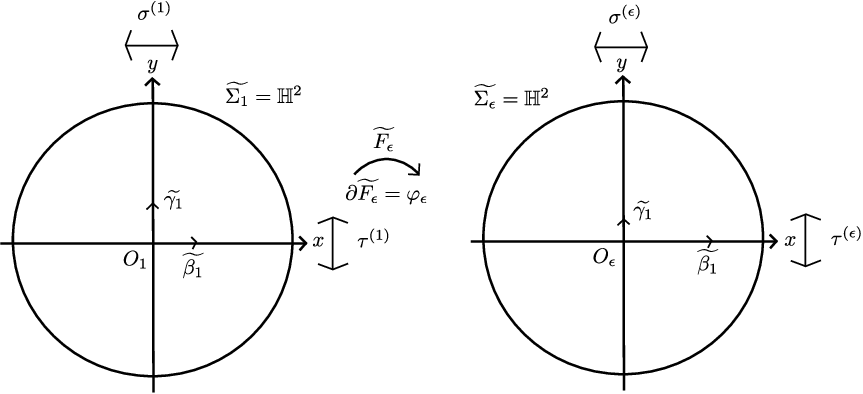}
       \caption{Douady-Earle extension map $\widetilde{F_\epsilon}$}
       \label{fig:quadrant}
\end{figure}

In the proposition below we consider the map $\varphi_\epsilon:\partial_\infty\mathbb H^2\to  \partial_\infty\mathbb H^2$
induced from $\rho_\epsilon$ as in \eqref{eq:boundary-map}, except that we replace $\Sigma_2$ by $\Sigma_\epsilon$.

\begin{prop}\label{prop:boundary-symmetry}
    The boundary map $\varphi_\epsilon$ preserves quadrants, and is both $(\sigma^{(1)},\sigma^{(\epsilon)})$ and $(\tau^{(1)},\tau^{(\epsilon)})$-equivariant.
\end{prop}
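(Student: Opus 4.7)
\emph{Equivariance.} The last bullet preceding the proposition gives $\sigma^{(\epsilon)}\circ(f_\epsilon\circ f_1^{-1})=(f_\epsilon\circ f_1^{-1})\circ\sigma^{(1)}$ on the nose, and hence $\rho_\epsilon=(f_\epsilon\circ f_1^{-1})_*$ intertwines the conjugation automorphisms $(\sigma^{(1)})_*$ and $(\sigma^{(\epsilon)})_*$ of the two fundamental groups; the same holds for $\tau$. Since $\sigma^{(\epsilon)}$ and $\tau^{(\epsilon)}$ fix $p_\epsilon$, their lifts $\tilde\sigma^{(\epsilon)},\tilde\tau^{(\epsilon)}$ fixing $O_\epsilon$ are precisely the reflections about the $y$- and $x$-axes. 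Let $\Phi\colon\mathbb{H}^2\to\mathbb{H}^2$ be any $\rho_\epsilon$-equivariant quasi-isometry extending $\varphi_\epsilon$. A direct computation, using the intertwining relation above, shows that $\Phi':=(\tilde\sigma^{(\epsilon)})^{-1}\circ\Phi\circ\tilde\sigma^{(1)}$ is again $\rho_\epsilon$-equivariant. Any two $\rho_\epsilon$-equivariant quasi-isometries $\mathbb{H}^2\to\mathbb{H}^2$ are at bounded distance (their composition with a quasi-inverse is $\pi_1(\Sigma_1)$-equivariant and hence descends to a bounded-distance self-map of the compact $\Sigma_1$), and therefore induce the same boundary extension. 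This forces $\varphi_\epsilon\circ\tilde\sigma^{(1)}|_{\partial}=\tilde\sigma^{(\epsilon)}|_{\partial}\circ\varphi_\epsilon$, and the $\tau$-case is identical.

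\emph{Quadrant-preservation.} It suffices to show that $\varphi_\epsilon$ sends each of the four ideal endpoints of the $x$- and $y$-axes on $\partial_\infty\widetilde\Sigma_1$ to the corresponding endpoint of the $x$- and $y$-axes on $\partial_\infty\widetilde\Sigma_\epsilon$. The $y$-axis in $\widetilde\Sigma_\epsilon$ is by construction the translation axis of the hyperbolic deck transformation associated to the oriented loop $\gamma_1$ based at $p_\epsilon$, so its two endpoints are the attracting and repelling fixed points of that element on the boundary. The analogous statement holds for the $x$-axis with $\beta_1$ in place of $\gamma_1$, once one observes that the zero-twist gluing closes up the seam $\beta_1$ on $P_1$ into a simple closed geodesic in $\Sigma_\epsilon$ through $p_\epsilon$ perpendicular to $\gamma_1$, hence a well-defined oriented class in $\pi_1(\Sigma_\epsilon,p_\epsilon)$. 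Since $\rho_\epsilon$ is induced by a marking preserving both the pants decomposition and the seams, it sends the oriented classes $[\gamma_1],[\beta_1]\in\pi_1(\Sigma_1,p_1)$ to the corresponding oriented classes in $\pi_1(\Sigma_\epsilon,p_\epsilon)$. The boundary extension of a quasi-isometry that conjugates two hyperbolic elements carries attracting fixed points to attracting fixed points, so $\varphi_\epsilon$ sends each of the four compass points to its correct counterpart. Because the ideal arcs between consecutive compass points are exactly the ideal boundaries of the four quadrants, $\varphi_\epsilon$ preserves each quadrant individually.

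The mildly subtle point is identifying $\beta_1$ as a specific oriented conjugacy class in $\pi_1(\Sigma_\epsilon,p_\epsilon)$ that is preserved by $\rho_\epsilon$; this is a bookkeeping consequence of the symmetric, zero-twist setup. Everything else follows from standard facts about equivariant quasi-isometric boundary extensions of compact hyperbolic surface groups.
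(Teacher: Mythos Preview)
Your proof is correct and follows essentially the same line as the paper's. For quadrant-preservation you and the paper both argue that $\varphi_\epsilon$ fixes the four axis endpoints because it conjugates the hyperbolic elements $[\gamma_1]$ and $[\beta_1]$ and hence matches attracting/repelling fixed points; you are slightly more careful than the paper in spelling out why $\beta_1$ closes up to a geodesic loop under the zero-twist gluing. For equivariance the paper gives the hands-on orbit argument (pick $g_iO_1\to\theta$, push through $\rho_\epsilon$ and $\sigma_*$, and compare limits), whereas you package the same content as ``$(\tilde\sigma^{(\epsilon)})^{-1}\Phi\tilde\sigma^{(1)}$ is again $\rho_\epsilon$-equivariant, and two $\rho_\epsilon$-equivariant quasi-isometries have the same boundary extension''; these are equivalent formulations, with yours slightly more abstract and the paper's slightly more explicit.
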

\begin{proof}
For each $\epsilon$, the $\mathbb Z$-subgroup of $\pi_1(\Sigma_\epsilon,p_\epsilon)$ corresponding to the $\beta_1$-curve acts on the $x$-axes by translation, hence their forwardward/backward limit points must be identified via the boundary map $\varphi_\epsilon$, that is, we have $\varphi_\epsilon(\pm 1,0)=(\pm 1, 0)$ in the disk model. Similarly, by taking the $\mathbb Z$-subgroup corresponding to the $\gamma_1$-curve, we can show $\varphi_\epsilon(0,\pm 1)=(0, \pm 1)$. Moreover, since the boundary map is a homeomorphism, it must preserve each quadrant. 

For any $\theta\in \partial_\infty\mathbb H^2=\partial_\infty\widetilde{\Sigma_1}$, it can be approximated by a sequence of orbit points. We assume $g_iO_1\to \theta$ where $g_i\in \pi_1(\Sigma_1,p_1)$. Suppose $\varphi_\epsilon(\theta)=\xi\in \partial_\infty\widetilde{\Sigma_\epsilon}$, then $\rho_\epsilon(g_i)O_\epsilon\to \xi$ by definition. Let $\sigma_*^{(\epsilon)}: \pi_1(\Sigma_\epsilon, p_\epsilon) \to \pi_1(\Sigma_\epsilon, p_\epsilon)$ be the automorphism induced by $\sigma^{(\epsilon)}$. Applying the $\sigma_*^{(1)}$-involution, we obtain that
\[\sigma_*^{(1)}(g_i)O_1\to \sigma^{(1)}(\theta),\]
and
\[\sigma_*^{(\epsilon)}\left(\rho_\epsilon(g_i)\right)O_\epsilon\to\sigma^{(\epsilon)}(\varphi_\epsilon(\theta))=\sigma^{(\epsilon)}(\xi).\]
Use the $(\sigma^{(1)},\sigma^{(\epsilon)})$-equivariance of $\rho_\epsilon$, we see that
\[\varphi_\epsilon(\sigma^{(1)}(\theta))=\lim_{i}[\rho_\epsilon(\sigma_*^{(1)}(g_i))O_\epsilon]=\lim_{i}[\sigma_*^{(\epsilon)}\left(\rho_\epsilon(g_i)\right)O_\epsilon]=\sigma^{(\epsilon)}(\varphi_\epsilon(\theta)).\]
This shows $\varphi_\epsilon$ is $(\sigma^{(1)},\sigma^{(\epsilon)})$-equivariant. Similarly, it is $(\tau^{(1)},\tau^{(\epsilon)})$-equivariant.
\end{proof}

We are now ready to discuss the corresponding symmetry of the Douady-Earle extension maps $F_\epsilon:\Sigma_1\to \Sigma_\epsilon$.

\begin{prop}\label{prop:DE-symmetry}
    On the universal cover, the Douady-Earle extension map $\widetilde{F_\epsilon}:\mathbb H^2\to \mathbb H^2$ sends the (positive/negative) $x$-axis to the (positive/negative) $x$-axis and sends the (positive/negative) $y$-axis to the (positive/negative) $y$-axis. In particular, it sends $O_1$ to $O_\epsilon$. Thus, when it descends to the quotient manifolds, the map $F\colon \Sigma_1\to \Sigma_\epsilon$ sends the $\gamma_1$-geodesic on $\Sigma_1$ to the $\gamma_1$-geodesic on $\Sigma_\epsilon$, and sends the $\beta_1$-geodesic on $\Sigma_1$ to the $\beta_1$-geodesic on $\Sigma_\epsilon$. In particular, it sends $p_1$ to $p_\epsilon$.
\end{prop}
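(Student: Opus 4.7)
The plan is to deduce \textbf{Proposition~\ref{prop:DE-symmetry}} as a straightforward consequence of the naturality of the barycenter construction together with the $(\mathbb Z/2\times\mathbb Z/2)$-equivariance of the boundary map $\varphi_\epsilon$ from \textbf{Proposition~\ref{prop:boundary-symmetry}}. The key observation is that if $G$ is any isometry of $\mathbb H^2$, then $B_{G\theta}(Gx,Gy)=B_\theta(x,y)$, so $\mathcal B_{G_*\nu}(Gx)$ differs from $\mathcal B_\nu(x)$ by a constant depending only on $\nu$ and $G$, and hence $\operatorname{bar}(G_*\nu)=G(\operatorname{bar}(\nu))$. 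Combined with the $\PSL_2(\mathbb R)$-equivariance of the Patterson--Sullivan measures $\mu_x$ (so that $G_*\mu_x=\mu_{Gx}$ for any isometry $G$), this gives the naturality we need.

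Now I would apply this with $G_1=\sigma^{(1)}$ on the source side and $G_\epsilon=\sigma^{(\epsilon)}$ on the target side. Using $\varphi_\epsilon\circ\sigma^{(1)}=\sigma^{(\epsilon)}\circ\varphi_\epsilon$ from \textbf{Proposition~\ref{prop:boundary-symmetry}}, one computes
\[
\widetilde{F_\epsilon}(\sigma^{(1)}(x))=\operatorname{bar}\bigl(\varphi_{\epsilon *}\sigma^{(1)}_*\mu_x\bigr)=\operatorname{bar}\bigl(\sigma^{(\epsilon)}_*\varphi_{\epsilon *}\mu_x\bigr)=\sigma^{(\epsilon)}\bigl(\widetilde{F_\epsilon}(x)\bigr),
\]
and the identical argument with $\tau$ in place of $\sigma$ gives $\widetilde{F_\epsilon}\circ\tau^{(1)}=\tau^{(\epsilon)}\circ\widetilde{F_\epsilon}$. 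Thus $\widetilde{F_\epsilon}$ is equivariant for both involutions.

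Next I would use the fixed-point structure. On the disk model, $\operatorname{Fix}(\tau^{(1)})$ is exactly the $x$-axis and $\operatorname{Fix}(\sigma^{(1)})$ is exactly the $y$-axis (and likewise on the $\epsilon$-side). If $x$ lies on the $x$-axis, then $\widetilde{F_\epsilon}(x)=\widetilde{F_\epsilon}(\tau^{(1)}(x))=\tau^{(\epsilon)}(\widetilde{F_\epsilon}(x))$, so $\widetilde{F_\epsilon}(x)$ lies on the $x$-axis; the analogous statement holds for the $y$-axis. Since $O_1$ is fixed by both $\tau^{(1)}$ and $\sigma^{(1)}$, its image is fixed by both $\tau^{(\epsilon)}$ and $\sigma^{(\epsilon)}$, forcing $\widetilde{F_\epsilon}(O_1)=O_\epsilon$.

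It remains to check that positive axes are preserved. Since $\widetilde{F_\epsilon}$ is an analytic diffeomorphism (by Douady--Earle) restricting to a homeomorphism of the $x$-axis fixing the origin, it is either orientation-preserving or orientation-reversing on that axis. The quickest way to rule out the second possibility is to invoke the fact from \cite{DE86} that $\widetilde{F_\epsilon}$ extends continuously to $\varphi_\epsilon$ on $\partial_\infty\mathbb H^2$; combined with $\varphi_\epsilon(\pm 1,0)=(\pm 1,0)$ and $\varphi_\epsilon(0,\pm 1)=(0,\pm 1)$ from \textbf{Proposition~\ref{prop:boundary-symmetry}}, this pins down the sign on each axis. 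The statements on $\Sigma_1\to\Sigma_\epsilon$ (that $\gamma_1$- and $\beta_1$-geodesics are preserved, and $p_1\mapsto p_\epsilon$) then follow by descending to the quotient. I do not anticipate a real obstacle: the only non-formal input is the boundary extension property, which is part of the Douady--Earle theorem, and the naturality of $\operatorname{bar}$ under isometries is a direct computation.
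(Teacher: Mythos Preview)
Your argument is correct. The first half---showing that each axis maps to itself via the equivariance of $\widetilde{F_\epsilon}$ under the involutions and the fixed-set description---is exactly what the paper does, just stated in a slightly more abstract form (you prove the global equivariance $\widetilde{F_\epsilon}\circ\tau^{(1)}=\tau^{(\epsilon)}\circ\widetilde{F_\epsilon}$ first and then restrict to fixed sets, whereas the paper phrases it directly as ``$\mu_{x_0}$ is $\tau$-invariant, so its pushforward is, so its barycenter is''). One small remark: you appeal to ``$\PSL_2(\mathbb R)$-equivariance'' of the Patterson--Sullivan measures, but $\sigma$ and $\tau$ are reflections, hence lie in $\PGL_2(\mathbb R)\setminus\PSL_2(\mathbb R)$; what you actually use (and what is true for the visual measures) is equivariance under the full isometry group.

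The genuine difference is in the \emph{positive/negative} step. The paper argues directly from the measure: if $x_0$ lies on the positive $x$-axis, then $\mu_{x_0}$ has strictly greater density on the first and fourth quadrants than on their $\sigma$-reflections; since $\varphi_\epsilon$ preserves quadrants and is $\sigma$-equivariant, the pushforward inherits this asymmetry, and the $1$-form equation \eqref{eq:1-form} then forces the barycenter onto the positive side. Your route instead uses the Douady--Earle boundary extension: $\widetilde{F_\epsilon}$ extends continuously to $\varphi_\epsilon$ on $\partial_\infty\mathbb H^2$, which fixes the four cardinal points, pinning down the orientation on each axis. Your approach is cleaner and avoids the density computation, at the cost of invoking the continuous-extension property from \cite{DE86}; the paper's approach stays entirely within the barycenter formalism already set up in \eqref{eq:1-form}--\eqref{eq:2-form}.
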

\begin{proof}
    Suppose $x_0$ lies on the $x$-axis on the base space $\widetilde{\Sigma_1}\cong\mathbb H^2$, then the Patterson-Sullivan measure $\mu_{x_0}$ (which coincides with the visual measure at $x_0$) is certainly $\tau_*^{(1)}$-invariant. Since the boundary map $\varphi_\epsilon$ is $(\tau^{(1)},\tau^{(\epsilon)})$-equivariant by Proposition \ref{prop:boundary-symmetry}, we see that the push-forward measure $(\varphi_\epsilon)_*\mu_{x_0}$ is $\tau^{(\epsilon)}$-invariant. Thus, its barycenter is also $\tau^{(\epsilon)}$-invariant. Then it must lie on the $x$-axis. 
    
    Moreover, if $x_0$ lies on the positive $x$-axis, then for any $\theta$ lying on the first and fourth quadrants, the density of $d\mu_{x_0}$ at $\theta$ is strictly bigger than the density at $\sigma(\theta)$. On the other hand, since the boundary map preserves the quadrants and is $(\sigma^{(1)},\sigma^{(\epsilon)})$-equivariant, the push-forward measure $(\varphi_\epsilon)_*\mu_{x_0}$ also weighs more at any $\theta$ on the first and fourth quadrants than at $\sigma^{(\epsilon)}(\theta)$ on the second and third quadrants. Therefore, in view of \eqref{eq:1-form}, it's barycenter must also lie on the positive $x$-axis. 
    
    The other part follows in a similar way.
\end{proof}

\section{Proof of Theorem \ref{thm:unbounded}}
In this section we introduce the notation $\lesssim$ to indicate that the left hand side is bounded by a uniform constant multiple of the right hand side. Similarly $\gtrsim$ the reverse bound.
We prove the following stronger statement. As a result, Theorem \ref{thm:unbounded} follows.
\begin{thm}
    Under the deformation described in Section \ref{sec:deformation}, the Douady-Earle extension map $F_\epsilon:\Sigma_1\to \Sigma_\epsilon$ satisfies that
    \[\Jac F_\epsilon(p_1) \gtrsim \frac{1}{\epsilon\ln^2\epsilon}.\]
    In particular, as $\epsilon\to 0^+$, we have $\Jac F_\epsilon\to +\infty$.
\end{thm}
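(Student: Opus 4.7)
The plan is to combine the symmetries established in \cref{sec:deformation} with the $(1,1)$-tensor identity \eqref{eq:2-form} to obtain a closed-form expression for $\Jac F_\epsilon(p_1)$, and then estimate the resulting boundary integrals as $\epsilon\to 0^+$. By \cref{prop:DE-symmetry}, $\widetilde F_\epsilon(O_1)=O_\epsilon$ and both coordinate axes are preserved; together with the $(\sigma^{(1)},\sigma^{(\epsilon)})$- and $(\tau^{(1)},\tau^{(\epsilon)})$-equivariance, this forces $d\widetilde F_\epsilon|_{O_1}$ to commute with the reflections about both axes. Hence in the axis-aligned orthonormal frames $\{e_1,e_2\}$ at $T_{O_1}\mathbb H^2$ and $\{\overline{e_1},\overline{e_2}\}$ at $T_{O_\epsilon}\mathbb H^2$, the differential is diagonal: $d\widetilde F_\epsilon(e_i)=\lambda_i\,\overline{e_i}$, so $\Jac F_\epsilon(p_1)=|\lambda_1\lambda_2|$.

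Next I would substitute this diagonal form into \eqref{eq:2-form}. Using the $\mathbb H^2$ identity $\nabla dB_\xi = g - dB_\xi\otimes dB_\xi$, the equation at $(O_1,O_\epsilon)$ becomes the matrix identity $(I-\overline U)\,d\widetilde F_\epsilon|_{O_1} = U$, where
\[ \overline U=\int_{\partial_\infty\mathbb H^2}\overline u\,\overline u^T\,d\mu_{O_1}, \qquad U=\int_{\partial_\infty\mathbb H^2} u\,\overline u^T\,d\mu_{O_1}, \]
with $u = \nabla B_{\theta,O_1}$ and $\overline u = \nabla B_{\varphi_\epsilon(\theta),O_\epsilon}$. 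Parametrizing the boundary by the angles $\phi$ (source) and $\psi=\varphi_\epsilon(\phi)$ (target) gives $u=(\cos\phi,\sin\phi)^T$ and $\overline u=(\cos\psi,\sin\psi)^T$, and \cref{prop:boundary-symmetry} kills all off-diagonal entries of $\overline U$ and $U$. Writing
\[ I_1=\int\sin^2\psi\,d\mu_{O_1},\quad I_2=\int\cos^2\psi\,d\mu_{O_1},\quad J_1=\int\cos\phi\cos\psi\,d\mu_{O_1},\quad J_2=\int\sin\phi\sin\psi\,d\mu_{O_1}, \]
(so that $I_1+I_2=1$), one then finds $\lambda_1=J_1/I_1$ and $\lambda_2=J_2/I_2$, whence $\Jac F_\epsilon(p_1) = J_1J_2/(I_1I_2)$.

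The remaining step is to estimate these four integrals in the limit. The length of $\gamma_1$ on $\Sigma_\epsilon$ is $\epsilon$ and the standard pair-of-pants/collar formula gives the $\beta_1$-geodesic length on $\Sigma_\epsilon$ as $\sim 2|\log\epsilon|$. Iterating the equivariances $\varphi_\epsilon\circ\beta=\beta_\epsilon\circ\varphi_\epsilon$ and $\varphi_\epsilon\circ\gamma=\gamma_\epsilon\circ\varphi_\epsilon$ at their respective fixed points produces the H\"older asymptotics $\psi\sim c_1\,\phi^{k_\epsilon}$ as $\phi\to 0$ (with exponent $k_\epsilon\to\infty$) and $\tfrac{\pi}{2}-\psi\sim c_2\bigl(\tfrac{\pi}{2}-\phi\bigr)^{\epsilon}$ as $\phi\to\tfrac{\pi}{2}$, together with their $\sigma$- and $\tau$-reflected counterparts. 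These force $(\varphi_\epsilon)_*\mu_{O_1}$ to concentrate near $\pm 1$ and be depleted near $\pm i$, giving $I_2\to 1$ and $J_1\to 2/\pi$ uniformly as $\epsilon\to 0^+$. The theorem then reduces to proving $J_2/I_1\gtrsim 1/(\epsilon\ln^2\epsilon)$, for instance by establishing $I_1\lesssim \epsilon$ and $J_2\gtrsim 1/|\log\epsilon|^2$.

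The main obstacle is this last estimation. The fixed-point H\"older asymptotics only describe $\varphi_\epsilon$ in exponentially small neighborhoods of $\pm 1$ and $\pm i$, whereas $I_1$ and $J_2$ depend on the bulk distribution of $(\varphi_\epsilon)_*\mu_{O_1}$ across the entire $\partial_\infty\mathbb H^2$. The natural approach is a dyadic-in-orbit decomposition of each quadrant along $\beta$-orbit arcs near $\pm 1$ and $\gamma$-orbit arcs near $\pm i$, using equivariance to transport each annular piece to a fundamental arc of bounded size and summing the contributions. Heuristically, the $\epsilon$-factor in $I_1$ comes from the short $\gamma_1$-length (which compresses target angular widths near $\pm i$, where $\sin^2\psi$ is large), and the $|\log\epsilon|^{-2}$ factor in $J_2$ comes from the logarithmic length of the $\beta_1$-collar (which determines the number of $\beta$-orbit arcs that must be summed in the transition region). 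Carrying out this decomposition carefully, and matching the two local regimes in a transitional middle zone, is where the technical work lies.
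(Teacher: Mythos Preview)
Your reduction to the diagonal integral formulas $\lambda_1=J_1/I_1$ and $\lambda_2=J_2/I_2$ is correct and matches the paper exactly. The gap is in the estimation step, where both your heuristic attribution of the factors and your proposed method of attack are off.

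First, the individual bounds you aim for are wrong. One can check directly that $J_2\asymp\epsilon$, not $J_2\gtrsim 1/|\log\epsilon|^2$; and the paper actually proves the much sharper $I_1\lesssim\epsilon^2\ln^2\epsilon$, not $I_1\lesssim\epsilon$. Thus the split is $\lambda_1\gtrsim 1/(\epsilon^2\ln^2\epsilon)$ and $\lambda_2\gtrsim\epsilon$, and the $\ln^2\epsilon$ factor lives entirely in $I_1$. It has nothing to do with the $\beta_1$-collar length; in fact the $\beta_1$ translation length never enters the argument at all.

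Second, the fixed-point H\"older asymptotics you invoke are both unnecessary and insufficient: they describe $\varphi_\epsilon$ only in tiny neighborhoods of $\pm 1,\pm i$, whereas the integrals are dominated by the bulk. The key observation you are missing is that the $\gamma_1$-equivariance, applied not to the fixed points $\pm i$ but to the \emph{endpoints of the $x$-axis}, gives exact values of $\varphi_\epsilon$ on a sequence filling the whole quadrant. Concretely, if $\Theta(s)\in(0,\pi/2)$ is the angle at which the geodesic perpendicular to the $y$-axis at hyperbolic height $s$ meets the boundary (so $\cosh s=1/\cos\Theta(s)$), then $a^k$ carries the $x$-axis to that perpendicular at height $k$ on $\widetilde\Sigma_1$ and at height $k\epsilon$ on $\widetilde\Sigma_\epsilon$; hence
\[
\varphi_\epsilon(\Theta(k))=\Theta(k\epsilon)\qquad\text{for every }k\in\mathbb Z_+,
\]
and monotonicity of $\varphi_\epsilon$ then controls it everywhere. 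With this in hand the estimates are one-line: for $I_1$, split at $\Theta(k)$ with $k=\lceil -2\ln\epsilon\rceil$ so that on $[0,\Theta(k)]$ one has $\sin^2\varphi_\epsilon\le\sin^2\Theta(k\epsilon)\lesssim(k\epsilon)^2\lesssim\epsilon^2\ln^2\epsilon$, while the remaining interval has length $\pi/2-\Theta(k)\lesssim e^{-k}\lesssim\epsilon^2$. For $J_2$, simply restrict to $[\Theta(1),\pi/2]$ and use $\sin\varphi_\epsilon\ge\sin\Theta(\epsilon)\gtrsim\epsilon$. No dyadic summation, no $\beta$-orbit arcs, and no matching of regimes is needed.
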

\begin{proof}
    We use the disk model and build up the coordinate systems as in Section \ref{sec:deformation} by setting $O_\epsilon$ the origin, the lift of $\beta_1$-geodesic (through $O_\epsilon$) the $x$-axis, and the lift of $\gamma_1$-geodesic (through $O_\epsilon$) the $y$-axis. Let $\{e_1,e_2\}$ be the standard base axial orthonormal frame at $T_{O_1}\mathbb H^2$ and $\{\overline{e_1},\overline{e_2}\}$ be the target axial orthonormal frame at $T_{O_\epsilon}\mathbb H^2$. By Proposition \ref{prop:DE-symmetry}, we know that at $O_1$, the differential map $d\widetilde F$ sends $e_1$ to the direction of $\overline{e_1}$ and $e_2$ to the direction of $\overline{e_2}$. We assume
    \begin{equation}\label{eq:eigenvalue}
        d\widetilde F(e_1)=\lambda_1\overline{e_1}\;\textrm{ and } \;d\widetilde F(e_2)=\lambda_2\overline{e_2}\;\textrm{ for some }\lambda_1,\lambda_2>0\;\textrm{ depending on }\epsilon.
    \end{equation}
    Then we have $\Jac \widetilde F_\epsilon=\lambda_1\cdot \lambda_2$. 
    The proof of the theorem follows immediately from the following lemma.
\end{proof}
    
\begin{lemma} We have aymptotic bounds
    \[\lambda_1\gtrsim \frac{1}{\epsilon^2\ln^2\epsilon}\;\textrm{ and }\;\lambda_2\gtrsim \epsilon.\]
\end{lemma}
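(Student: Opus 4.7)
The plan is to extract scalar identities for $\lambda_1,\lambda_2$ from equation \eqref{eq:2-form} and then estimate the resulting boundary integrals. I would apply \eqref{eq:2-form} at $x=O_1$, $y=\widetilde{F}(O_1)=O_\epsilon$ (using \cref{prop:DE-symmetry}). Writing $d\widetilde{F}(e_i)=\lambda_i\overline{e_i}$ (again by \cref{prop:DE-symmetry}), substituting $\cdot=e_i$, taking inner products with $\overline{e_i}$, and using the identity $\nabla dB_\xi=g-dB_\xi\otimes dB_\xi$ on $\mathbb{H}^2$ along with the angular formulas $dB_{\theta,O}(e_1)=-\cos\theta$ and $dB_{\theta,O}(e_2)=-\sin\theta$ (and analogs at $O_\epsilon$), together with $d\mu_{O_1}=d\theta/(2\pi)$, yields
\begin{align*}
    \lambda_1\int_0^{2\pi}\sin^2\psi(\theta)\,d\theta&=\int_0^{2\pi}\cos\theta\cos\psi(\theta)\,d\theta,\\
    \lambda_2\int_0^{2\pi}\cos^2\psi(\theta)\,d\theta&=\int_0^{2\pi}\sin\theta\sin\psi(\theta)\,d\theta,
\end{align*}
where $\psi:=\varphi_\epsilon(\theta)$. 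By \cref{prop:boundary-symmetry} both integrands possess the full $4$-fold dihedral symmetry, so the factors of $4$ cancel from the ratios and it suffices to work on the first quadrant, where $\psi:[0,\pi/2]\to[0,\pi/2]$ is monotone with $\psi(0)=0$ and $\psi(\pi/2)=\pi/2$.

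The crucial geometric input is a bulk two-sided estimate of the form $\psi(\theta)\asymp\epsilon|\ln\epsilon|$ valid for $\theta$ in a range like $[\pi/6,\,\pi/2-C\epsilon^2\ln^2\epsilon]$. Heuristically the scale $\epsilon|\ln\epsilon|$ is the product of two translation-length ratios: the $\gamma_1$-translation shrinks from $1$ in $\Sigma_1$ to $\epsilon$ in $\Sigma_\epsilon$, while applying the right-angled hexagon formula to the pair of pants $(\epsilon,\epsilon,1)$ yields $\ell_{\beta_1}^{(\epsilon)}=2|\ln\epsilon|+O(1)$ against $\ell_{\beta_1}^{(1)}=O(1)$. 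A formal proof of the two-sided bound would track shadows of the deck-group orbits $\Gamma\cdot O_1$ and $\Gamma\cdot O_\epsilon$ under the identification $\rho_\epsilon$, using the explicit boundary dynamics of $\gamma_1$ and $\beta_1$ near their fixed points on the coordinate axes of the disk.

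Granting the bulk estimate, the two lower bounds follow routinely. For $\lambda_1$, the denominator satisfies
\[
    \int_0^{\pi/2}\sin^2\psi(\theta)\,d\theta\lesssim \bigl(\epsilon|\ln\epsilon|\bigr)^2+\epsilon^2\ln^2\epsilon\lesssim \epsilon^2\ln^2\epsilon,
\]
using the upper bound on the bulk and the trivial bound $\sin^2\psi\leq 1$ on the length-$O(\epsilon^2\ln^2\epsilon)$ end interval, while the numerator $\int_0^{\pi/2}\cos\theta\cos\psi\,d\theta\gtrsim 1$ since $\cos\psi\to 1$ uniformly on the bulk. Hence $\lambda_1\gtrsim 1/(\epsilon^2\ln^2\epsilon)$. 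For $\lambda_2$ the denominator is bounded by $\pi/2$, while
\[
    \int_0^{\pi/2}\sin\theta\sin\psi(\theta)\,d\theta\gtrsim \epsilon|\ln\epsilon|\int_{\pi/6}^{\pi/2-C\epsilon^2\ln^2\epsilon}\sin\theta\,d\theta\gtrsim \epsilon|\ln\epsilon|\geq\epsilon,
\]
giving $\lambda_2\gtrsim\epsilon$. The main obstacle is the bulk two-sided asymptotic $\psi\asymp\epsilon|\ln\epsilon|$: the upper and lower bounds must be proven simultaneously and with sharp constants, since the former drives the $\lambda_1$ estimate and the latter drives the $\lambda_2$ estimate.
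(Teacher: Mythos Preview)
Your derivation of the scalar identities for $\lambda_1,\lambda_2$ and the reduction by symmetry to integrals over $[0,\pi/2]$ matches the paper exactly.

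The gap is the ``bulk two-sided estimate'' $\psi(\theta)\asymp\epsilon|\ln\epsilon|$: the lower half of it is false. The only input the paper uses about $\varphi_\epsilon$ is the exact relation $\varphi_\epsilon(\Theta(k))=\Theta(k\epsilon)$ for every $k\in\mathbb Z_+$, where $\Theta(s)\in(0,\pi/2)$ is the boundary angle of the geodesic perpendicular to the $y$-axis at distance $s$ from the origin (so $\cos\Theta(s)=1/\cosh s$, hence $\Theta(s)\sim s$ near $0$ and $\pi/2-\Theta(s)\sim 2e^{-s}$ near $\infty$). This comes directly from the $\rho_\epsilon$-equivariance of $\varphi_\epsilon$ under the $\gamma_1$-translations. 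In particular $\psi(\Theta(1))=\Theta(\epsilon)\sim\epsilon$, and $\Theta(1)\approx 0.87$ lies well inside your bulk interval; so $\psi$ drops to order $\epsilon$ there, not $\epsilon|\ln\epsilon|$. Your heuristic of multiplying the two translation-length ratios produces the wrong scale, and $\beta_1$ is not used at all.

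The paper avoids any two-sided bulk bound by combining the relation above with monotonicity of $\varphi_\epsilon$. For $\lambda_1$: take $k=\lceil-2\ln\epsilon\rceil$; on $[0,\Theta(k)]$ one has $\psi\le\Theta(k\epsilon)\lesssim k\epsilon\lesssim\epsilon|\ln\epsilon|$, while $[\Theta(k),\pi/2]$ has length $\lesssim e^{-k}\lesssim\epsilon^2$, so the denominator is $\lesssim\epsilon^2\ln^2\epsilon$; the numerator is $\gtrsim 1$ already on $[0,\Theta(1)]$ since $\psi\le\Theta(\epsilon)\le\Theta(1)$ there. For $\lambda_2$: on $[\Theta(1),\pi/2]$ one has $\psi\ge\Theta(\epsilon)\sim\epsilon$, so the numerator is $\gtrsim\epsilon$, and the denominator is trivially $\le\pi/2$. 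The upper and lower controls on $\psi$ live at genuinely different scales ($\epsilon|\ln\epsilon|$ versus $\epsilon$), so there is no need---and no possibility---of packaging them into a single $\asymp$ estimate or of tracking orbit shadows.
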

\begin{proof}    
    We substitute $e_1, e_2$ into the 2-form equation of the Douady-Earle extension \eqref{eq:2-form} and get for $i=1,2$
    \[\int_{\partial_\infty \mathbb H^2} \nabla dB_{\varphi(\theta),O_\epsilon}(d\widetilde F(e_i))d\mu(\theta)=\int_{\partial_\infty \mathbb H^2} dB_{\theta,O_1}(e_i)\nabla B_{\varphi(\theta),O_\epsilon} d\mu(\theta),\]
    where $d\mu$ is the visual (Lebesgue) measure at $O_1$. Take the inner product of the above equation with $\overline e_i$ and use \eqref{eq:eigenvalue}, we obtain that
    \[\lambda_i=\frac{\int_{\partial_\infty \mathbb H^2} dB_{\theta,O_1}(e_i) dB_{\varphi_\epsilon(\theta),O_\epsilon}(\overline e_i) d\mu(\theta)}{\int_{\partial_\infty \mathbb H^2} \nabla dB_{\varphi_\epsilon(\theta),O_\epsilon}(\overline e_i, \overline e_i)d\mu(\theta)},\;\textrm{ for } i=1,2.\]
    If we choose $\theta\in [0,2\pi)$ the angle parameter and use the $\mathbb Z/2\times \mathbb Z/2$ symmetry of $\mu$ and $\varphi_\epsilon$, then we can simplify further to
    \begin{equation}\label{eq:la1}
        \lambda_1=\frac{\int_{0}^{\pi/2}\cos\theta\cdot \cos\varphi_\epsilon(\theta)d\theta}{\int_{0}^{\pi/2}\sin^2\varphi_\epsilon(\theta)d\theta},
    \end{equation}
    and 
    \begin{equation}\label{eq:la2}
        \lambda_2=\frac{\int_{0}^{\pi/2}\sin\theta\cdot \sin\varphi_\epsilon(\theta)d\theta}{\int_{0}^{\pi/2}\cos^2\varphi_\epsilon(\theta)d\theta}.
    \end{equation}
For the convenience, we introduce the following function $$\Theta:\mathbb R^+\to (0,\pi/2)$$ with the property that the unique bi-infinite geodesic, which intersects the positive $y$-axis orthogonally and is (hyperbolic) distance $s$ away from the origin, intersects the boundary circle at angle $\Theta(s)$ and $\pi-\Theta(s)$. (See Figure \ref{fig:Theta}.) Using the hyperbolic law of cosines, we see that
\begin{equation}
    \cosh(s)=\frac{1}{\cos(\Theta(s))}.
\end{equation}
Therefore, 
$$\lim_{s\to 0^+}\frac{\Theta(s)}{s}=1=\lim_{s\to+\infty}\frac{\pi/2-\Theta(s)}{2e^{-s}}.$$

\begin{figure}[ht]\centering
       \includegraphics[scale=0.65]{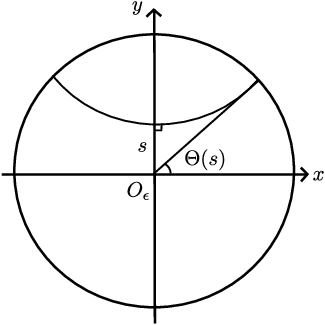}
       \caption{The $\Theta$ function}
       \label{fig:Theta}
\end{figure}

Denote $a\in \pi_1(S_g)$ the element which corresponds to the $\gamma_1$ curve, then for any $k\in \mathbb Z_+$, the element $(f_\epsilon)_*(a^k)\in \pi_1(\Sigma_\epsilon,p_\epsilon)$ acts on its universal cover isometrically such that it translates the $x$-axis to the  geodesic which is distance $(k\cdot \epsilon)$ away from the origin, and meanwhile intersecting the positive $y$-axis orthogonally. This means that the boundary map $\varphi_\epsilon$ sends the point at angle $\Theta(k)$ to the point at angle $\Theta(k\cdot\epsilon)$ for any $k\in \mathbb Z_+$ since $\varphi_\epsilon$ is $\rho_\epsilon$-equivariant. Moreover, since $\varphi_\epsilon$ is a homeomorphism, we know that it is strictly increasing. In particular, it sends $(0,\Theta(k))$ homeomorphically to $(0,\Theta(k\epsilon))$ for each $k\in \mathbb Z_+$. (See Figure \ref{fig:map}) Now we are ready to estimate $\lambda_1$ and $\lambda_2$. 

\begin{figure}\centering
       \includegraphics[scale=0.65]{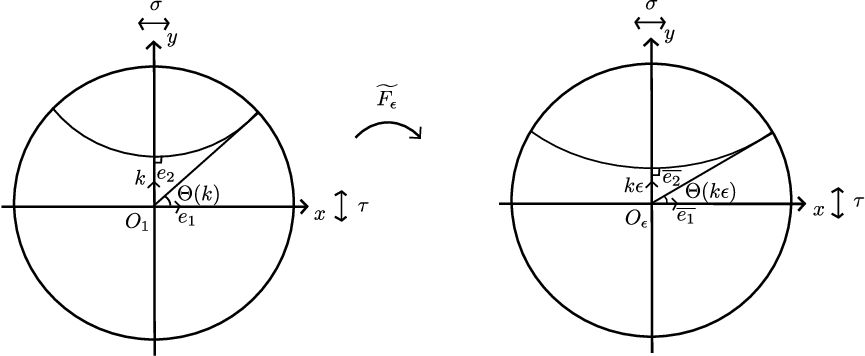}
       \caption{The map $F_\epsilon$}
       \label{fig:map}
\end{figure}

For $\lambda_1$, we choose $k=\ceil{-2\ln(\epsilon)}$. Then the numerator of \eqref{eq:la1} has lower bounds
\begin{align*}
\begin{split}
    \int_{0}^{\pi/2}\cos\theta\cdot \cos\varphi_\epsilon(\theta)d\theta&\geq \int_{0}^{\Theta(1)}\cos\theta\cdot \cos\varphi_\epsilon(\theta)d\theta\\
    &\geq \cos^2(\Theta(1))\cdot \Theta(1)\\
    &=O(1),
\end{split}
\end{align*}
where the second inequality uses $\varphi_\epsilon(\Theta(1))=\Theta(\epsilon)\leq \Theta(1)$. The denominator of \eqref{eq:la1} has upper bounds
\begin{align*}
\int_{0}^{\pi/2}\sin^2\varphi_\epsilon(\theta)d\theta&=\int_{0}^{\Theta(k)}\sin^2\varphi_\epsilon(\theta)d\theta+\int_{\Theta(k)}^{\pi/2}\sin^2\varphi_\epsilon(\theta)d\theta\\
    &\leq \sin^2\Theta(k\epsilon)\cdot \frac{\pi}{2}+1\cdot \left(\frac{\pi}{2}-\Theta(k)\right)\\
    &\lesssim \epsilon^2\ln^2\epsilon+ e^{-k}\\
    &\lesssim \epsilon^2\ln^2\epsilon.
\end{align*}
Thus, we obtain
\[\lambda_1\gtrsim  \frac{1}{\epsilon^2\ln^2\epsilon}.\]

For $\lambda_2$, the numerator of \ref{eq:la2} has lower bounds
\begin{align*}
    \int_{0}^{\pi/2}\sin\theta\cdot \sin\varphi_\epsilon(\theta)d\theta &\geq \int_{\Theta(1)}^{\pi/2}\sin\theta\cdot \sin\varphi_\epsilon(\theta)d\theta\\
    &\geq \sin\Theta(1)\cdot \sin\Theta(\epsilon)\cdot \left(\frac{\pi}{2}-\Theta(1)\right)\\
    &\gtrsim \epsilon.
\end{align*}
The denominator of \eqref{eq:la2} has upper bounds
\begin{align*}
    \int_{0}^{\pi/2}\cos^2\varphi_\epsilon(\theta)d\theta\leq \frac{\pi}2.
\end{align*}
Thus, we have
\[\lambda_2\gtrsim \epsilon.\]
\end{proof}

\bibliography{bcgsurface.bib}
\bibliographystyle{alpha}

\end{document}